\documentclass[12pt,a4paper]{amsart}
\usepackage[a4paper,centering]{geometry}
\usepackage{mathptmx}
\DeclareMathAlphabet{\mathcal}{OMS}{cmsy}{m}{n} % but do not change mathcal symbols

\usepackage{amssymb,authoraftertitle,enumerate,fancyhdr,perpage}

\theoremstyle{plain} % the default
\newtheorem{theorem}{Theorem}
\newtheorem{lemma}{Lemma}

\newcommand{\modulus}[1]{\ensuremath{|\, #1 \,|}}

\MakePerPage[2]{footnote}

\allowdisplaybreaks

\DeclareMathOperator{\Ker}{Ker}

\begin{document}

\title{The Hua identity implies alternativity}

\author{Pasha Zusmanovich}
\address{University of Ostrava, Czech Republic}
\email{pasha.zusmanovich@osu.cz}

\date{First written September 24, 2026}
%\thanks{J. XXX, to appear} 

\begin{abstract}
The well-known Hua identity $a - (a^{-1} + (b^{-1} - a)^{-1})^{-1} = aba$ holds
in any associative division ring. We prove, in a sense, a converse: if a
(nonassociative) ring with unit and multiplicative inverses satisfies the Hua
identity, then it is alternative.
\end{abstract}

\vspace*{-1.1cm}
\maketitle

\pagestyle{fancy}
\setlength{\headheight}{12pt}
\fancyhead[L]{\footnotesize P. Zusmanovich}
\fancyhead[C]{\footnotesize \MyTitle}
\fancyhead[R]{\footnotesize \thepage{}}
\fancyfoot[L,C,R]{}
%%%%%%%%%%%%%%%%%%%%%%%%%%%%%%%%%%%%%%%%%%%%%%%%%%%%%%%%%%%%%%%%%%%%%%%%%%

\vspace*{-0.7cm}
\section*{Introduction}

In any associative division ring, it holds
\begin{equation*}
a - \big(a^{-1} + (b^{-1} - a)^{-1}\big)^{-1} = aba
\end{equation*}
for any two nonzero elements $a,b$ of the ring such that $ab \ne 1$. This is the
Hua identity which, together with its one-line proof, adorns most of the algebra
textbooks. In fact, a bit more is true: the identity holds in any associative
ring whenever all the involved inverses make sense.

Here we are concerned with the natural question: what about the converse of this
statement? Namely, if a not necessarily associative ring with unit, and such
that any nonzero element has a multiplicative inverse, satisfies the Hua
identity, must it be associative? The immediate answer is ``no'', for an
apparent reason: by an extension of the Artin theorem, any division subring of
an alternative division ring generated by two elements is associative (see,
e.g., \cite[Chapter 7, \S 3, Exercise 3]{nearly-ass}); the Hua identity involves
two elements only, the division subring generated by these two elements is
associative, and so the Hua identity holds for them. Thus, the Hua identity
holds in any alternative division ring. Moreover, like in the associative case,
the Hua identity holds in any alternative ring whenever all the involved
inverses make sense; the few-lines proof is just a bit more involved than in the
associative case and uses flexibility and the right (or left) inverse property
-- see, e.g., \cite[\S 2]{ferreira}. So the converse statement should involve
not associativity, but (a more general) alternativity:

\begin{theorem}\label{th-1}
Let $R$ be a ring with unit, having multiplicative inverses, and satisfying the
Hua identity
\begin{equation}\label{eq-huahua}
a - \big(a^{-1} + (b^{-1} - a)^{-1}\big)^{-1} = (ab)a
\end{equation}
for any nonzero elements $a,b\in R$ such that $a \ne b^{-1}$ and
$a^{-1} + (b^{-1} - a)^{-1} \ne 0$. Then $R$ is alternative. 
\end{theorem}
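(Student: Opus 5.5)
The plan is to squeeze two ``inverse properties'' out of \eqref{eq-huahua} by specializing $a$ and $b$, and then invoke the classical fact that a ring with unit and inverses satisfying the left and right inverse properties is alternative. Concretely, the intermediate targets are
\begin{equation*}
a^{-1}(ab) = b, \qquad (ba)a^{-1} = b \qquad \text{for all } a \ne 0 \text{ and all } b \in R .
\end{equation*}
Given these, alternativity follows from the known result on inverse-property division rings (Bruck--Kleinfeld type). Alternatively, directly: apply the left law to $a+1$ and expand. This gives $(a+1)^{-1}\big(ab + b\big) = b$, i.e.\ $ab + b = (a+1)b$. That is trivial, so instead we combine the left law for $a$ and for $a^{-1}$ with the linearization coming from replacing $a$ by $a + \lambda\cdot 1$ for integers $\lambda$, and extract $a(ab) = (aa)b$.

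\textbf{Step 1: the case $b = 1$.} Here \eqref{eq-huahua} says that $a - a^2$ is the inverse of $a^{-1} + (1-a)^{-1}$. I would use this, together with the identity obtained from $a = 1$ and the substitutions $a \mapsto -a$, $a \mapsto a^{-1}$, to show that the subring generated by a single element $a$ and its inverses is commutative and power-associative. In particular $(a^n)^{-1} = (a^{-1})^n$, and the inverses of the polynomials in $a$ that occur in these substitutions (such as $1 - a$) behave as expected.

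\textbf{Step 2: rewrite the identity.} Write $c = b^{-1}$, so that
\begin{equation*}
(ac^{-1})a = a - \big(a^{-1} + (c-a)^{-1}\big)^{-1}.
\end{equation*}
The right-hand side involves only $a$, $c$, sums and inverses. I would compare this instance with the instances obtained by replacing $a$ by $a^{-1}$, and $c$ by $c - a$. The aim is to extract $a^{-1}\big((ab)a\big) = ba$ and $\big((ab)a\big)a^{-1} = ab$. Replacing $b$ by $a^{-1}b$, respectively $ba^{-1}$, then turns these into the two inverse properties above. The key algebraic manipulation is the associative-ring proof of Hua run ``backwards'': the expression $a^{-1} + (c-a)^{-1}$ should be identified with $(a^{-1}c)\,(c-a)^{-1}$ or its mirror. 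This identification is exactly where a product of three elements first appears and gets pinned down.

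\textbf{Main obstacle.} I expect the main difficulty to be Step 2. Without associativity we cannot factor $a^{-1} + (c-a)^{-1}$ as a product, so every step has to go through the hypothesis itself, applied to carefully chosen pairs. We also have to keep track of the excluded cases $a = b^{-1}$ and $a^{-1} + (b^{-1}-a)^{-1} = 0$, and perturb by $\pm 1$ to avoid them. If the direct extraction of the inverse properties stalls, the fallback is a different linearization. Replace $a$ by $a + 1$ in \eqref{eq-huahua}, and use Step 1 to simplify the left-hand side. This should yield $(ab)a$ in terms of $ab + ba$ and data from the one-generated subrings. Flexibility $(ab)a = a(ba)$ would then follow from the left--right symmetry of the resulting formula. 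After that, the left alternative law would come from one further substitution, $b \mapsto b + 1$, combined with flexibility.
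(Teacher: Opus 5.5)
Your proposal has a genuine gap: no inverse property is ever derived. The closing implication you cite (left plus right inverse property $\Rightarrow$ alternative) is fine, but you never reach its hypotheses. Step~2, which you yourself flag as the crux, is only an aim. The manipulation you suggest for it (identifying $a^{-1}+(c-a)^{-1}$ with $(a^{-1}c)(c-a)^{-1}$) is, as you note, unavailable without associativity. Step~1's claim that one-generated subrings are commutative and power-associative is likewise unproved, and is not needed. Even granting your two target identities, the substitutions $b\mapsto a^{-1}b$, $b\mapsto ba^{-1}$ do not turn them into $a^{-1}(ab)=b$ and $(ba)a^{-1}=b$. That step needs cancellations of the very kind being proved, or surjectivity of $\ell_a$ and $r_a$, which you do not have.

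The fallback fails for a structural reason. Replacing $a$ by $a+1$ only gives $((a+1)b)(a+1)=(ab)a+ab+ba+b$, i.e.\ the Jordan product $ab+ba$ as a sum--inverse expression. The product $a(ba)$ never appears, so there is no left--right symmetry from which flexibility could follow. More generally, substituting sum--inverse expressions in $a,b$ into the hypothesis and linearizing only ever produces products of shape $(uv)w$. So the left-handed half of your target is out of reach of such manipulations alone; in the paper it is precisely what the Skornyakov--San Soucie theorem supplies.

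You also skip two points the statement forces on you: inverses are not assumed unique (the identity must hold for every admissible choice), and you never show that $R$ is a division ring. The paper settles both first.
\begin{itemize}
\item Putting $a=1$ gives $i\big(1+i(i(b)-1)\big)=1-b$, so any choice $i$ of inverses is a bijection of $R\setminus\{0\}$.
\item Hence $b\mapsto(ab)a$ is injective off the two points $0$ and $i^{-1}(a)$. Being additive, it is injective outright when $\modulus{R}>4$; rings of order $\le 4$ are shown to be associative directly.
\item Injectivity rules out zero divisors, so inverses are unique.
\item Adjoining $\infty$ then shows $b\mapsto(ab)a$ is bijective, so $R$ is a division ring.
\end{itemize}

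The key step you are missing is a single well-chosen pair of instances. Apply the hypothesis to $(a+b,a^{-1})$ and to $(b,-a^{-1})$. The nested term $\big((a+b)^{-1}-b^{-1}\big)^{-1}$ occurs in both with opposite signs, and adding the two equalities gives $(ba^{-1})a=b$, the right inverse property.

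No left inverse property is needed. The right one yields $r_{(ab)a}=r_a\circ r_b\circ r_a$, via the associative Hua identity applied to $r_a,r_b$ in the multiplication ring; taking $b=1$ gives right alternativity. The Skornyakov--San Soucie theorem then upgrades a right alternative division ring to an alternative one.
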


In other words, for a ring $R$ the following two conditions are
equivalent:
\begin{enumerate}[\upshape(i)]
\item
$R$ is unital, having multiplicative inverses, and satisfies the Hua identity;
\item 
$R$ is an alternative division ring.
\end{enumerate}

Note that when writing the Hua identity in not necessarily associative ring, we
have to make the choice of bracketing at the right-hand side $aba$. This choice
really does not matter: if the statement is true for either choice, say, for
$(ab)a$, as stated in \eqref{eq-huahua}, then applying it for the ring with the
opposite multiplication, we get the same statement with the variant of the Hua
identity having $a(ba)$ at the right-hand side.

Note also that we do not assume uniqueness of inverses; that means that the Hua
identity \eqref{eq-huahua} holds for every admissible choice of the inverses
involved.

We split the proof of Theorem \ref{th-1} into two lemmas.

\begin{lemma}\label{lemma-inv}
Let $R$ be a ring satisfying the conditions of Theorem \ref{th-1}. Then inverses
in $R$ are unique.
\end{lemma}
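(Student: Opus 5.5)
The plan is to exploit the remark preceding the lemma: the Hua identity \eqref{eq-huahua} holds for \emph{every} admissible choice of the inverses involved. Its right-hand side $(ab)a$ involves no inverses. So whenever the element $c = a^{-1} + (b^{-1}-a)^{-1}$ has two inverses $c'$ and $c''$, we get $a - c' = (ab)a = a - c''$, and hence $c' = c''$. It therefore suffices to show that every nonzero $z \in R$ can be written as such a $c$ for some admissible $a, b$ and some admissible choice of the inner inverses. Throughout, "$y$ is an inverse of $x$" means $xy = yx = 1$. This relation is symmetric, which will let us pick inverses freely.

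First we treat $z = 1$ directly: if $1\cdot y = 1$, then $y = 1$, so $1$ has only the inverse $1$. This also settles the case $a = 1$, where $a^{-1} = 1$ is forced. Now let $z \neq 0, 1$. Take $a = 1$, and let $w$ be some inverse of $z - 1 \neq 0$. Then $w \neq 0$. Moreover $w \neq -1$: since $(-1)(-1) = 1$ by distributivity, $w = -1$ would give $z - 1 = -1$ from $(z-1)w = 1$, i.e.\ $z = 0$. Hence $w + 1 \neq 0$, and we let $b$ be some inverse of $w+1$. Then $b \neq 0$, and by symmetry $w + 1$ is an inverse of $b$; we choose $b^{-1} = w+1$.

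With these choices $b^{-1} - a = w$, and we choose $(b^{-1}-a)^{-1} = z - 1$, which is legitimate because $z-1$ is an inverse of $w$. This gives $c = 1 + (z-1) = z$. Next we check admissibility: $a = 1$ and $b$ are nonzero; $a \neq b^{-1}$ because $b^{-1} - a = w \neq 0$; and $c = z \neq 0$. The Hua identity then yields $1 - z' = b = 1 - z''$ for any two inverses $z', z''$ of $z$, so $z' = z''$.

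The only delicate point is bookkeeping about which inverse is ``chosen'' at each stage. Uniqueness is not yet available, so I would check carefully that the hypothesis of Theorem~\ref{th-1} quantifies over all choices of inverses, including the choice of $b^{-1}$ among the inverses of $b$. I would also check that the symmetry of the inverse relation makes $w+1$ and $z-1$ legitimate choices. The side conditions $w \neq -1$ and $b \neq 0$ are the remaining small obstacles, and both are handled above.
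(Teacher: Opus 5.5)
Your proof is correct, but it takes a genuinely different route from the paper's.

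\textbf{The paper's route.} The paper makes the same substitution $a=1$, which gives \eqref{eq-i}, but uses it only to show that a single fixed (odd) choice function $i$ is a bijection of $R\setminus\{0\}$. It then shows that $H_a$, and hence $U_a\colon b\mapsto(ab)a$, is injective away from the two points $0$ and $i^{-1}(a)$. Additivity of $U_a$ plus a counting argument, which needs $|R|>4$, gives $\Ker U_a=0$. Hence $\ell_a$ is injective, $R$ has no zero divisors, and inverses are unique. Rings of order at most $4$ are handled separately, by showing that every unital ring of order at most $4$ with multiplicative inverses is associative.

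\textbf{Your route and what it buys.} You instead vary only the outermost inverse while keeping the inner ones fixed. You realize every $z\neq 0,1$ as $a^{-1}+(b^{-1}-a)^{-1}$ and observe that the right-hand side $(ab)a=b$ does not depend on that choice. This gives uniqueness in a few lines, uniformly in $|R|$, with no small-order analysis.

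\textbf{What the paper's route buys.} Your argument genuinely needs the hypothesis for independent choices in different positions of the formula. If $w$ or $b$ happens to equal $z$, you give that element one inverse in an inner slot and arbitrary inverses in the outer slot. The paper explicitly stipulates that every admissible choice is allowed, so this is not a gap. The paper's argument, however, never leaves one consistent choice function, so it proves uniqueness under the weaker assumption that the Hua identity holds for one such choice. It also sets up the maps $H_a$ and $U_a$ that the proof of Lemma~\ref{lemma-alt} reuses.

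\textbf{A small slip.} The reason $w\neq -1$ is $(z-1)(-1)=-(z-1)$, not $(-1)(-1)=1$. The latter only says that $-1$ is an inverse of $-1$, and deducing $z-1=-1$ from it would presuppose the uniqueness you are proving.
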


\begin{lemma}\label{lemma-alt}
Let $R$ be a ring satisfying the conditions of Theorem \ref{th-1}, and,
moreover, inverses in $R$ are unique. Then $R$ is alternative.
\end{lemma}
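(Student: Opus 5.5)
The plan is to reduce alternativity to the \emph{inverse property}, namely $a^{-1}(ab) = b = (ba)a^{-1}$ for $a \ne 0$. Once this is established, the classical theorem of Bruck and Kleinfeld (and Skornyakov's result in characteristic $2$) applies: a division ring with the two-sided inverse property is alternative. So the whole task is to extract the inverse property from \eqref{eq-huahua}, using uniqueness of inverses.

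\emph{Step 1.} Since inverses are unique, \eqref{eq-huahua} can be rewritten as an identity about inverses. It says that
\[
a^{-1} + (b^{-1}-a)^{-1} = \big(a - (ab)a\big)^{-1}.
\]
Uniqueness then lets us multiply by $a-(ab)a$ on either side and read off relations. I would first specialize to $b=1$. Since $(a\cdot 1)a = a^2$, this gives
\[
a^{-1} + (1-a)^{-1} = (a-a^2)^{-1}.
\]
Here $a - a^2 = a(1-a) = (1-a)a$ by distributivity. Multiplying this relation on the right and on the left by $a(1-a)$, and comparing with $(1-a)(1-a)^{-1} = 1$, should give identities such as $(a^2)a^{-1} = a = a^{-1}a^2$, together with their analogues for $1-a$. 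I would also substitute $a \mapsto a^{-1}$ to get $(a^{-1})^2 = (a^2)^{-1}$. This gives power-associativity-type relations for $a$, $a^{-1}$, $a^2$.

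\emph{Step 2.} Next I would exploit the fact that the left-hand side of \eqref{eq-huahua} involves only addition and inversion. This means that the operator $U_a\colon b \mapsto (ab)a$ is determined by the "additive-inversion" structure. Two consequences follow.

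\begin{enumerate}[\upshape(a)]
\item Since the right-hand side is additive in $b$, so is the left-hand side.
\item Replacing $(a,b)$ by $(a^{-1}, b^{-1})$ and using uniqueness of inverses, one derives the Hua-type relations
\[
(U_a b)^{-1} = U_{a^{-1}} b^{-1} \quad\text{and}\quad U_{a^{-1}} U_a = \mathrm{id}.
\]
\end{enumerate}

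Applying $U_{a^{-1}}U_a = \mathrm{id}$ to specific elements, for instance $b$ and $a^{-1}b$, and combining this with the relations from Step 1, should yield $\big(a^{-1}((ab)a)\big)a^{-1} = b$. It should also give the flexible law $(ab)a = a(ba)$. The flexible law would come from comparing the instance with $b$ replaced by $a^{-1}ba^{-1}$-type elements, or by substituting $a \mapsto a+1$ and using additivity in $b$. From these, the one-sided inverse properties $a^{-1}(ab) = b$ and $(ba)a^{-1} = b$ should be obtainable.

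\emph{Step 3.} Finally, invoke Bruck--Kleinfeld/Skornyakov to conclude alternativity. Alternatively, one can derive $(aa)b = a(ab)$ directly. To do so, apply the left inverse property to $a^{-1}$ and $a^2$ and use $(a^{-1})^2 = (a^2)^{-1}$ from Step 1; then linearize.

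The main obstacle I expect is Step 2, specifically getting the inverse property (not merely the $U$-operator identities) from \eqref{eq-huahua}. The Hua identity naturally controls only the quadratic operator $U_a$, so passing from $U$-identities to one-sided multiplications needs a clever substitution. My first attempt would be $b \mapsto a^{-1}c$ combined with the substitution $a \mapsto a+1$, which isolates the terms $ab$ and $ba$ separately.
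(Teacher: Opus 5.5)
There is a genuine gap, at exactly the point you flag as the main obstacle. Nothing in Steps 1--2 actually produces an inverse property, and the intermediate identities you expect do not follow from the manipulations you describe.

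\emph{Step 1.} Multiply $a^{-1}+(1-a)^{-1}=(a-a^2)^{-1}$ on the right by $a-a^2=(1-a)a$. This produces the term $(1-a)^{-1}\big((1-a)a\big)$, which you cannot simplify without a left inverse property. Writing $c=1-a$, you only get $a^{-1}a^2+c^{-1}c^2=1=a+c$. That is, $g(a)=-g(1-a)$ for $g(x)=x^{-1}x^2-x$, not $g(a)=0$. Symmetrically, the left multiplication gives only $a^2a^{-1}+c^2c^{-1}=1$.

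\emph{Step 2.} Substituting $(a^{-1},b^{-1})$ gives $a^{-1}-\big(a+(b-a^{-1})^{-1}\big)^{-1}=(a^{-1}b^{-1})a^{-1}$. Nothing in the addition/inversion structure alone relates this to the inverse of $a-\big(a^{-1}+(b^{-1}-a)^{-1}\big)^{-1}$. So $(U_ab)^{-1}=U_{a^{-1}}b^{-1}$ and $U_{a^{-1}}U_a=\mathrm{id}$ are asserted, not derived. The flexible law and the one-sided inverse properties are then only said to be ``obtainable''.

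Your target is also stronger than what this one-sided bracketing readily gives. The natural consequence is only the \emph{right} inverse property, and you offer no mechanism for $a^{-1}(ab)=b$, which the Bruck--Kleinfeld theorem requires.

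The missing idea is a pair of admissible substitutions whose nested inverses cancel. Take $a,b,a+b\neq 0$ and write the identity for $(a+b,a^{-1})$ and for $(b,-a^{-1})$:
\begin{itemize}
\item the left-hand sides are $(a+b)-\big((a+b)^{-1}-b^{-1}\big)^{-1}$ and $b+\big((a+b)^{-1}-b^{-1}\big)^{-1}$;
\item the right-hand sides are $a+b+(ba^{-1})a+(ba^{-1})b$ and $-(ba^{-1})b$.
\end{itemize}
Adding gives $(ba^{-1})a=b$, hence $(ba)a^{-1}=b$ for all $b$ and all $a\neq 0$.

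The paper then does not aim for the two-sided property. Instead it proceeds in three steps:
\begin{enumerate}[\upshape(i)]
\item It shows $R$ is a division ring: extending $H_a$ to $R\cup\{\infty\}$ via $0^{-1}=\infty$ makes it a bijection agreeing with $U_a=r_a\circ\ell_a$ on $R$.
\item From the right inverse property it obtains the right Moufang identity $r_{(ab)a}=r_a\circ r_b\circ r_a$, by applying the associative Hua identity to $r_a,r_b$ in the multiplication ring. This gives right alternativity.
\item It finishes with the Skornyakov--San Soucie theorem: right alternative division rings (right Moufang in characteristic~$2$) are alternative.
\end{enumerate}
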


\section*{Notation and Conventions}

By \emph{ring} we mean an arbitrary ring which is not assumed to be associative,
alternative, or to satisfy any other distinguished identity; the adjectives
``associative'', ``alternative'', etc., are always mentioned explicitly. The
multiplication from the right, respectively from the left, by an element $x$ is
denoted by $r_x$, respectively, $\ell_x$. A (unital) ring is said to have
\emph{multiplicative inverses}, if for any nonzero element $x$ of the ring,
there is -- not necessarily unique -- element $x^{-1}$ such that
$x x^{-1} = x^{-1} x = 1$. Note that in such rings $1^{-1} = 1$ is defined
uniquely. 

A ring $R$ is said to have \emph{right inverse property} if for any 
nonzero $a \in R$ there is $b \in R$ such that $(xa)b = x$ for any $x\in R$ 
(or, in other words, $r_a^{-1} = r_b$); the \emph{left inverse property} is
defined analogously. If a ring has multiplicative inverses, then the right
inverse property can be written in a more precise form: $r_a^{-1} = r_{a^{-1}}$.
A ring $R$ is a \emph{division ring}, if for any $a,b \in R$, $a \ne 0$, there
are unique elements $x,y \in R$ such that $ax = b$ and $ya = b$. 

Compositions of maps are written from right to left.

\section{Proof of Lemma \ref{lemma-inv}, $\modulus{R} > 4$}

For each nonzero $x \in R$, fix arbitrarily a choice of inverse $x^{-1}$, which
will be denoted by $i(x)$. Thus, we have
\begin{equation}\label{eq-hua}
a - i\Big(i(a) + i\big((i(b) - a\big)\Big) = (ab)a
\end{equation}
whenever the expression at the left-hand side makes sense. Moreover, $i(1) = 1$,
and we can assume $i(-x) = -i(x)$ for any nonzero $x\in R$.

Substituting $a=1$ into \eqref{eq-hua}, we get
\begin{equation}\label{eq-i}
i\Big(1 + i\big(i(b) - 1\big)\Big) = 1 - b
\end{equation}
for any $b\in R$ such that $b\ne 0,1$. Note that the left-hand side here is well
defined: $i(b) = 1$ implies $b=1$, and $i\big(i(b) - 1\big) = -1$ implies
$i(b) = 0$, a contradiction.

Now, \eqref{eq-i} shows that $i(b) = i(c)$ implies $b=c$, and the range of
possible values of $i$ contains all elements of $R$ of the form $1-b$ for 
$b\ne 0,1$, that is, $R \backslash \{0,1\}$, as well as $1 =i(1)$. This shows
that the map $i: R \backslash \{0\} \to R \backslash \{0\}$ is both injective
and surjective.

Fix $a\ne 0$. Let us denote the left-hand side of \eqref{eq-hua} by $H_a(b)$,
and consider it as a function of $b$. It is defined on the whole $R$ except for
the values $b=0$, $b=i^{-1}(a)$, and the value of $b$ satisfying
$i(i(b) - a) = -i(a) = i(-a)$. Due to bijectivity of $i$, the latter equality
yields $i(b)=0$, a contradiction.

Thus, the map $H_a$ is defined on $R \backslash \{0,i^{-1}(a)\}$. It is a
composition of the maps $i$, the change-of-the-sign map $x \mapsto -x$, and 
translations of the form $x \mapsto x + c$ for certain fixed element $c$ (equal
to either $a$, or to $i(a)$). All these maps are injective, hence $H_a$ is
injective. But \eqref{eq-hua} yields $H_a(b) = (ab)a$, so we have (an additive)
map $U_a: b \mapsto (ab)a$ (obviously, defined on the whole $R$)\footnote{
The maps $U_a$, sometimes called ``quadratic multiplications'', play a certain
role in the theory of alternative algebras and their relation to Jordan
algebras; see, e.g., \cite{mccrimmon}.
},
which is injective on $R \backslash \{0,i^{-1}(a)\}$.

Assume $R$ contains more than four elements. Let $b$ be a nonzero element in
$\Ker(U_a)$. Choose $c\in R$ different from any of the four elements $0$,
$i^{-1}(a)$, $-b$, and $i^{-1}(a) - b$. Then
$c, b+c \in R \backslash \{0,i^{-1}(a)\}$, and $U_a(c) = U_a(b+c)$, a
contradiction. Thus $\Ker(U_a) = 0$, and $U_a$ is injective
on the whole $R$.

We have $U_a = r_a \circ \ell_a$. Since $U_a$ is injective, $\ell_a$ is
injective too. In particular, $R$ has no zero divisors, hence the multiplicative
inverses are unique, i.e., $x^{-1} = i(x)$ for any nonzero $x\in R$.

\section{Proof of Lemma \ref{lemma-inv}, $\modulus{R} \le 4$}

It remains to consider the case where $R$ contains at most four elements. The
classification of all (nonassociative) rings of order $\le 4$ is available, see
\cite{boers}; so, in principle, this case can be resolved by a direct inspection
of the list there containing 52 entries. However, this is an arduous task, even
at the age of AI, so we can do much better by employing the standard arguments
from elementary algebra.

We are going to prove that a unital ring of order $\le 4$, and having 
multiplicative inverses, is associative. This will establish
Lemma~\ref{lemma-inv} (and, in fact, the whole Theorem \ref{th-1}) in this case.

The characteristic of a ring with multiplicative inverses is zero or prime; the
proof is the same as in the case of fields (uniqueness of inverses is not
needed). Since the ring is finite, the characteristic is prime. Then the subring of $R$ generated by $1$, let us denote it by $F$, is the prime
subfield, and $R$ has an algebra structure over $F$. If the order of $R$ is
prime, it coincides with $F$. Thus the only remaining case to consider is
$\modulus{R} = 4$. In this case the characteristic of $R$ is $2$, $F = GF(2)$,
and $R$ is a two-dimensional algebra over $GF(2)$. But any two-dimensional unital
algebra $R$ over a field $F$ is associative (even without the assumptions of $R$
having multiplicative inverses, and $F$ coinciding with $GF(2)$). Indeed, choose a basis $\{1, a\}$ in $R$. Then $a^2 = \alpha 1 + \beta a$ for certain
$\alpha, \beta \in F$, and thus $R$ is isomorphic to
$F[t]/(t^2 - \beta t -\alpha)$, an associative (and commutative) algebra.

\section{Proof of Lemma \ref{lemma-alt}}

We pick up the narrative from the proof of Lemma \ref{lemma-inv}. 

Extend $R$ to the set $R \cup \{\infty\}$, and extend the (unique) inverse map,
the change-of-the-sign map, and translations by setting
$0^{-1} = \infty$, $\infty^{-1} = 0$, $-\infty = \infty$, and 
$\infty + c = c + \infty = \infty$ for any $c\in R$. Then $H_a(0) = 0$,
$H_a(a^{-1}) = a$, and $H_a(\infty) = \infty$, so $H_a$ is a bijection on
$R \cup \{\infty\}$, which, being restricted to $R$, coincides with $U_a$.
Consequently, $U_a$ is a bijection, and both $\ell_a$ and $r_a$ are bijections.
This means that $R$ is a division ring.

Now take $a,b \in R$ such that $a,b,a+b \ne 0$. Writing the Hua identity for the
pairs $(a+b,a^{-1})$ and $(b,-a^{-1})$ gives respectively:
$$
(a+b) - \big((a+b)^{-1} - b^{-1}\big)^{-1} = a + b + (ba^{-1})a + (ba^{-1})b
$$
and
$$
b + \big((a+b)^{-1} - b^{-1}\big)^{-1} = -(ba^{-1})b .
$$

Summing up the last two equalities yields
$$
(ba^{-1})a = b .
$$

The last equality, in its turn, obviously holds also in the excluded cases $b=0$
and $b=-a$, thus it holds for any $a,b \in R$, $a\ne 0$. Interchanging $a$ and
$a^{-1}$, we get
$$
(ba)a^{-1} = b ,
$$
i.e., $R$ has the right inverse property. But then $R$ is right alternative 
(right Moufang in characteristic $2$, see, e.g., \cite[\S 1]{mccrimmon}). But by
the Skornyakov--San Soucie theorem, a right alternative (also satisfying the
right Moufang identity in characteristic $2$) division ring is alternative (see,
e.g., \cite[Chapter 16, \S 1, Corollary 2]{nearly-ass}, and \cite{san-soucie}
for characteristic $2$ case).

\section{Epilogue: The Hua identity strikes again}

One interesting aspect of the just presented proof is left implicit: this is the
penultimate implication, claiming right alternativity of a division ring with
the right inverse property. Somewhat amusingly, the arguments involves the
application of the associative Hua identity, and we reproduce it here. The 
arguments are certainly not new, see, e.g., the already cited
\cite[\S 1]{mccrimmon}, or \cite[Lemma 3.1]{darpo}.

First observe:

\begin{theorem}\label{th-2}
Let $R$ be a unital ring having multiplicative inverses. Then the following two
conditions are equivalent:
\begin{enumerate}[\upshape(i)]
\item $R$ has the right inverse property;
\item $R$ satisfies the Hua identity \eqref{eq-huahua} .
\end{enumerate}
\end{theorem}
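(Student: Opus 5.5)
The plan is to prove the two implications separately. For (ii)$\Rightarrow$(i) I would essentially reuse what is already done. By Lemma \ref{lemma-inv} inverses are unique. In the proof of Lemma \ref{lemma-alt}, the two instances of \eqref{eq-huahua} for the pairs $(a+b,a^{-1})$ and $(b,-a^{-1})$ are summed to give $(ba^{-1})a=b$; interchanging $a$ and $a^{-1}$ then gives $(ba)a^{-1}=b$. I would check that this computation uses only the Hua identity and uniqueness of inverses, and not the division-ring property, which was derived separately. That check settles (ii)$\Rightarrow$(i).

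For (i)$\Rightarrow$(ii), I would first extract elementary consequences of the right inverse property, $r_a^{-1}=r_{a^{-1}}$.
\begin{enumerate}[\upshape(a)]
\item Inverses are unique: if $xa=1$ then $x=(xa)a^{-1}=a^{-1}$. Hence a one-sided check $zw=1$ suffices to identify $z=w^{-1}$.
\item Every $r_a$ with $a\ne0$ is bijective.
\item $(xy)^{-1}=y^{-1}x^{-1}$. I would prove this by showing $\big(y^{-1}x^{-1}\big)(xy)=1$ via operator manipulations: from $r_{xy}^{-1}=r_{(xy)^{-1}}$, apply both sides to suitable elements and use $r_x^{-1}=r_{x^{-1}}$.
\end{enumerate}
For (c) I would also try the standard trick of substituting $x\mapsto x^{-1}y$-type elements into $(uv)v^{-1}=u$.

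Next I compute both sides of \eqref{eq-huahua} with $c=b^{-1}-a$. Using only distributivity and the right inverse property,
\[
a^{-1}+c^{-1}=r_c^{-1}\big(a^{-1}c+1\big)=r_c^{-1}\big(a^{-1}(c+a)\big)=\big(a^{-1}b^{-1}\big)c^{-1},
\]
and on the other side
\[
a-(ab)a=\big((b^{-1}-a)b\big)a=(cb)a .
\]
By (c), the inverse of $a^{-1}+c^{-1}$ is $c\,(ba)$. The identity therefore reduces to $(cb)a=c(ba)$, where $c=b^{-1}-a$. By distributivity this splits into $(b^{-1}b)a=b^{-1}(ba)$ and $(ab)a=a(ba)$, i.e.\ the left-inverse-type relation $b^{-1}(ba)=a$ and flexibility.

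The main obstacle is precisely these two relations, together with (c). My first attempt would be to derive them from the right inverse property alone by working in the group generated by the bijections $r_x$, repeatedly using $r_x^{-1}=r_{x^{-1}}$. I would also try to rebracket so that only right multiplications occur, e.g.\ comparing $r_a r_b$ with $r_{ba}$ on well-chosen arguments such as $b^{-1}$ and $1$. If this fails, the fallback is to follow the known argument in \cite[\S 1]{mccrimmon} and \cite[Lemma 3.1]{darpo}. That argument rewrites \eqref{eq-huahua} entirely in terms of the operators $r_x$, so that it becomes an instance of the \emph{associative} Hua identity in the associative ring of additive endomorphisms of $R$. That reformulation is exactly the mechanism hinted at in the section title.
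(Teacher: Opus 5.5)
The gap is in (i)$\Rightarrow$(ii). Your (ii)$\Rightarrow$(i) is the paper's argument (Lemma~\ref{lemma-inv} plus the two Hua instances from the proof of Lemma~\ref{lemma-alt}). You are right that only the Hua identity and uniqueness of inverses are used there.

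Your reduction of \eqref{eq-huahua} to $(cb)a=c(ba)$ with $c=b^{-1}-a$ is algebraically correct. But it moves the entire difficulty into three relations, $(xy)^{-1}=y^{-1}x^{-1}$, $b^{-1}(ba)=a$ and $(ab)a=a(ba)$, and ``operator manipulations'' is not an argument for any of them. None of them is a formal consequence of $r_x^{-1}=r_{x^{-1}}$. In loops, the right inverse property implies neither the left nor the antiautomorphic inverse property, as right Bol loops show. Also, flexibility is exactly what separates right alternative rings from alternative ones. In this setting the three relations do hold in the end. However, the only route to them visible here goes through (i)$\Rightarrow$(ii) itself: the division-ring property comes via Lemma~\ref{lemma-alt}, and the right Bol identity \eqref{eq-aba} comes from the same operator computation. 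It then needs the Skornyakov--San Soucie theorem. So your main line is either circular or a very heavy detour.

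What you list as a ``fallback'' is the paper's whole proof, and it takes a few lines, so you should lead with it and carry it out. Take $a,b\ne 0$ with $a\ne b^{-1}$, and set $d:=a^{-1}+(b^{-1}-a)^{-1}\ne 0$. The map $x\mapsto r_x$ is additive and $r_x^{-1}=r_{x^{-1}}$. Hence $r_b^{-1}-r_a=r_{b^{-1}-a}$ is invertible with inverse $r_{(b^{-1}-a)^{-1}}$. Then $r_a^{-1}+r_{(b^{-1}-a)^{-1}}=r_d$ is invertible with inverse $r_{d^{-1}}$. So every inverse in the associative Hua identity for $r_a,r_b$ in $\mathrm{End}(R,+)$ exists, and that identity reads $r_{a-d^{-1}}=r_a\circ r_b\circ r_a$. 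Evaluating both sides at $1$ gives $a-d^{-1}=(ab)a$. This route needs none of (c), flexibility, or the left-inverse relation. The evaluation at $1$ is also what produces the bracketing $(ab)a$ rather than $a(ba)$.
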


\begin{proof}
(i) $\Rightarrow$ (ii).
Fix nonzero elements $a,b$ of a ring $R$ such that $a \ne b^{-1}$ and
$a^{-1} + (b^{-1} - a)^{-1} \ne 0$, and apply the Hua identity to the elements $r_a$, $r_b$ of the (associative) multiplication ring of $R$:
\begin{equation}\label{eq-rr}
r_a - \big(r_a^{-1} + (r_b^{-1} - r_a)^{-1}\big)^{-1} = 
r_a \circ r_b \circ r_a .
\end{equation}

Using the fact that $r_a^{-1} = r_{a^{-1}}$ and the additivity of $r$, the last
equality can be rewritten as
\begin{equation}\label{eq-r}
r_{a - (a^{-1} + (b^{-1} - a)^{-1})^{-1}} = r_a \circ r_b \circ r_a .
\end{equation}

(This also shows that all the terms at the left-hand side of \eqref{eq-rr} are
well defined). Evaluating both sides of the last equality at $1$, we get the Hua
identity
\eqref{eq-huahua} for $R$.

(ii) $\Rightarrow$ (i). As in the proof of Theorem \ref{th-1}.
\end{proof}

Now, assume a ring $R$ satisfies the equivalent conditions of
Theorem~\ref{th-2}. The Hua identity for $R$ coupled with \eqref{eq-r} gives
\begin{equation}\label{eq-aba}
r_{(ab)a} = r_a \circ r_b \circ r_a
\end{equation}
for any nonzero $a,b \in R$ such that $a \ne b^{-1}$ and
$a^{-1} + (b^{-1} - a)^{-1} \ne 0$. By Lemma \ref{lemma-inv}, the inverses in
$R$ are unique, hence the latter inequality holds always. The equality
\eqref{eq-aba} holds trivially also when one of $a,b$ is zero, and in the
remaining case $a = b^{-1}$ it holds due to the right inverse property. Thus \eqref{eq-aba}
is an identity in $R$; this is the right Moufang identity
$$
c((ab)a) = (((ca)b)a) ,
$$
which in characteristic $\ne 2$ is equivalent to right alternativity\footnote{
The right Moufang identity also sometimes called the \emph{right Bol identity}
in the literature. Rings satisfying both the right alternative identity and the right
Moufang (Bol) identity are sometimes called \emph{strongly right alternative}.
In characteristic $\ne 2$, the classes of alternative rings and strongly right
alternative rings coincide; see, e.g., the already cited
\cite[Chapter 16, \S 1]{nearly-ass} and \cite{san-soucie}.
}.

Thus, the full proof of Theorem~\ref{th-1} uses the Hua identity twice: first
the ``nonassociative'' version \eqref{eq-huahua} as given in the theorem
premises, and, second, the (classical) associative version applied to the
multiplication ring of the ring in question!

\section*{Acknowledgement}

ChatGPT (GPT 5.6-Sol and 6-Astra) was used.


\begin{thebibliography}{ZSSS}

\bibitem[B]{boers} A.H. Boers, \emph{L'anneau \'a quatre elements},
Proc. Koninkl. Ned. Akad. Wet. Ser. A \textbf{69} (1966), 14--21.

%\bibitem[DPI1]{darpo-arxiv} E. Darp\"o, J.M. P\'erez Izquierdo,
%\emph{Inversion and quasigroup identities in division algebras},
%arXiv:1205.6250.

\bibitem[DPI]{darpo} E. Darp\"o, J.M. P\'erez Izquierdo,
\emph{Autotopies and quasigroup identities: New aspects of non-associative division algebras},
Forum Math. \textbf{27} (2015), no.5, 2691--2745.

\bibitem[FJS]{ferreira} B.L.M. Ferreira, H. Julius, D. Smigly,
\emph{Commuting maps and identities with inverses on alternative division rings},
J. Algebra \textbf{638} (2024), 488--505.

\bibitem[M]{mccrimmon} K. McCrimmon,
\emph{Quadratic methods in nonassociative algebras},
Proc. ICM, Vancouver 1974, Vol. 1, 325--330.

\bibitem[SS]{san-soucie} R.L. San Soucie,
\emph{Right alternative division rings of characteristic two},
Proc. Amer. Math. Soc. \textbf{6} (1955), no.2, 291--296.

\bibitem[ZSSS]{nearly-ass} 
K.A. Zhevlakov, A.M. Slin'ko, I.P. Shestakov, A.I. Shirshov,
\emph{Rings That Are Nearly Associative}, Academic Press, 1982.

\end{thebibliography}
\end{document}